\newcommand{\de}{\partial}
\newcommand{\R}{\mathbb R}
\newcommand{\al}{\alpha}
\newcommand{\C}{\mathbb C}
\newcommand{\B}{\mathbb B}
\newcommand{\Hol}{{\sf Hol}(\mathbb B^2, \mathbb C^2)}
\newcommand{\HD}{{\sf Hol}_D(\mathbb B^2, \mathbb C^2)}
\newcommand{\D}{\mathbb D}
\newcommand{\N}{\mathbb N}
\def\v{\varphi}
\def\Re{{\sf Re}\,}
\newtheorem{theorem}{Theorem}[section]
\newtheorem{lemma}[theorem]{Lemma}
\newtheorem{proposition}[theorem]{Proposition}
\newtheorem{corollary}[theorem]{Corollary}
\theoremstyle{definition}
\newtheorem{definition}[theorem]{Definition}
\theoremstyle{remark}
\newtheorem{remark}[theorem]{Remark}
\numberwithin{equation}{section}
\numberwithin{equation}{section}
\begin{document}
\title[Shearing process and support functions]{Shearing process and an example of a bounded support function in $S^0(\B^2)$}
\author[F. Bracci]{Filippo Bracci}\thanks{Supported by the ERC grant ``HEVO - Holomorphic Evolution Equations'' n. 277691}
\address{F. Bracci: Dipartimento di Matematica\\
Universit\`{a} di Roma \textquotedblleft Tor Vergata\textquotedblright\ \\
Via Della Ricerca Scientifica 1, 00133 \\
Roma, Italy} \email{fbracci@mat.uniroma2.it}

\begin{abstract}
We introduce a process, that we call {\sl shearing}, which for any given normal Loewner chain produces a normal Loewner chain made of shears automorphisms. As an application, and in stringent contrast to the one-dimensional case,  we prove the existence of a starlike bounded function in the class $S^0$ of the ball $\B^2$ (in fact the restriction of a shear automorphism of $\C^2$) which is a support point for a linear continuous functional.
\end{abstract}

\maketitle

\section{Introduction}

Let $\B^n:=\{z\in \C^n: \|z\|<1\}$. Let $S(\B^n)$ denote the family of univalent holomorphic maps from $\B^n$ to $\C^n$ such that $f(0)=0$ and $df_0={\sf id}$. Such a class is known to be compact for $n=1$ (see, {\sl e.g.} \cite{Po}) but it is not for $n>1$.

A {\sl normalized Loewner chain} on $\B^n$ is a family $(f_t)_{t\geq 0}$ of univalent maps from $\B^n$ to $\C^n$ such that $f_s(\B^n)\subset f_t(\B^n)$ for $0\leq s\leq t$, $f_t(0)=0$ and $d(f_t)_0=e^t {\sf id}$. A normalized Loewner chain $(f_t)$ is said a {\sl normal Loewner chain} if $\{e^{-t}f_t\}_{t\geq 0}$ is normal in $\B^n$. Let $S^0(\B^n)$ be the subset of $S(\B^n)$ of maps which admit {\sl parametric representation}, namely, $f\in S^0(\B^n)$ provided there exists a normal Loewner chain $(f_t)$ in $\B^n$ such that $f_0=f$. It is known, that in $\D=\B^1$ it holds $S^0(\D)=S(\D)$ (see \cite{Po}), while $S^0(\B^n)$ is strictly contained in $S(\B^n)$  for $n\geq 2$  (see, {\sl e.g.} \cite{GK03}). Nonetheless, the class $S^0(\B^n)$ is compact and many similar growth estimates as in the one-dimensional case can be pursued.

For studying extremal problems, it is important to determine the so called {\sl support points}. Endow ${\sf Hol}(\B^n, \C^n)$ with the  topology of uniform convergence on compacta, which makes it a Fr\'echet space. A function $f\in S^0(\B^n)$ is called a {\sl support point} if there exists a bounded linear functional $L:{\sf Hol}(\B^n, \C^n)\to \C$  such that $L$ is not constant on $S^0(\B^n)$ and $\Re L(f)=\max_{h\in S^0(\B^n)} \Re L(h)$.

In dimension one it was proved by Schaeffer (see, {\sl e.g.} \cite{Scha}) that support points are slit functions. In particular they are unbounded.

In higher dimension, much work has been done to study support points in $S^0(\B^n)$ and many evidences that support points in $S^0(\B^n)$ should be unbounded have been obtained, see, {\sl e.g.}, \cite{GHKK1}, \cite{GHKK2}, \cite{Schl}, \cite{GHK0}, \cite{BGHK}, \cite{M}, \cite{R}.

However, and very surprisingly, in this paper, we construct an example of a {\sl bounded} starlike map in $S^0(\B^2)$ which is a support point.

Such a map is quite simple and it is in fact the restriction to $\B^2$ of a shear of $\C^2$:
\[
\Phi(z_1,z_2)=(z_1+\frac{3\sqrt{3}}{2}z_2^2, z_2), \quad \forall (z_1,z_2)\in \B^2.
\]

Let $L^1_{0,2}: \Hol \to \C$ be defined as $L^1_{0,2} (f):=\frac{1}{2}\frac{\de^2 f_1}{\de z_2^2}(0)$, where $f=(f_1,f_2)\in \Hol$.

\begin{theorem}\label{example}
The map $\Phi\in S^0(\B^2)$  is starlike and  maximizes  $\Re L^1_{0,2}$.
\end{theorem}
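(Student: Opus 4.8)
The plan is to treat the three assertions in the order: starlikeness of $\Phi$, membership $\Phi\in S^0(\B^2)$, and finally the extremal inequality $\Re L^1_{0,2}(g)\le\frac{3\sqrt3}{2}$ for all $g\in S^0(\B^2)$; the first two are essentially formal once starlikeness is checked, while the real content sits in the last. For \emph{starlikeness} I would use the infinitesimal criterion: a normalized biholomorphism $g$ of $\B^n$ onto its image is starlike with respect to $0$ iff $\Re\langle[Dg(z)]^{-1}g(z),z\rangle>0$ for all $z\in\B^2\setminus\{0\}$. A direct computation gives $[D\Phi(z)]^{-1}\Phi(z)=\bigl(z_1-\tfrac{3\sqrt3}{2}z_2^2,\,z_2\bigr)$, hence $\Re\langle[D\Phi(z)]^{-1}\Phi(z),z\rangle=\|z\|^2-\tfrac{3\sqrt3}{2}\,\Re(z_2^2\overline{z_1})$. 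Since $\Re(z_2^2\overline{z_1})\le|z_1|\,|z_2|^2$, with equality after a suitable choice of arguments, the criterion reduces to the scalar inequality $\tfrac{3\sqrt3}{2}\,|z_1|\,|z_2|^2<\|z\|^2$ on $\B^2\setminus\{0\}$. Writing $|z_1|=\rho a$, $|z_2|=\rho b$ with $a^2+b^2=1$, $0<\rho<1$, this reads $\rho\,ab^2<\tfrac{2}{3\sqrt3}$; since $\max\{ab^2:a^2+b^2=1,\ a,b\ge 0\}=\tfrac{2}{3\sqrt3}$ (attained at $a=1/\sqrt3$) and $\rho<1$, the strict inequality holds throughout. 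Note $\tfrac{3\sqrt3}{2}$ is exactly the largest constant $c$ for which $(z_1,z_2)\mapsto(z_1+cz_2^2,z_2)$ is starlike.

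Next, any starlike normalized univalent $g$ lies in $S^0$: indeed $g_t:=e^tg$ is a normalized Loewner chain (starlikeness gives $e^{s-t}g(\B^n)\subset g(\B^n)$ for $s\le t$) and $e^{-t}g_t\equiv g$ is trivially normal; alternatively one exhibits the associated normal chain of shear automorphisms via the shearing process. Hence $\Phi\in S^0(\B^2)$. Since $L^1_{0,2}(\Phi)=\tfrac12\,\frac{\partial^2\Phi_1}{\partial z_2^2}(0)=\tfrac{3\sqrt3}{2}$, the theorem reduces to the bound $\Re L^1_{0,2}(g)\le\tfrac{3\sqrt3}{2}$ for every $g\in S^0(\B^2)$.

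For the upper bound I would use the Loewner equation. Given $g\in S^0(\B^2)$ with normal Loewner chain $(f_t)$, $f_0=g$, and associated Herglotz vector field $G(\cdot,t)$, expand the transition maps $\varphi_{0,t}$ as power series in $z$ and match homogeneous degrees in $\partial_t\varphi_{0,t}=\pm G(\varphi_{0,t},t)$; passing to the limit in $g=\lim_{t\to\infty}e^t\varphi_{0,t}$ yields an identity $L^1_{0,2}(g)=\int_0^\infty e^{-t}c(t)\,dt$, where $c(t)$ is, up to a fixed sign, the coefficient of $z_2^2$ in the first component of the quadratic part of $G(\cdot,t)$. As $\int_0^\infty e^{-t}\,dt=1$, it suffices to bound $\Re c(t)$ suitably for a.e.\ $t$. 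Since $G(\cdot,t)$ is, for a.e.\ $t$, a semicomplete infinitesimal generator on $\B^2$, running it autonomously produces a one-parameter semigroup $(\psi_s)$ of holomorphic self-maps of $\B^2$ fixing $0$, and $\widetilde g:=\lim_{s\to\infty}e^s\psi_s$ is a starlike map with $L^1_{0,2}(\widetilde g)=\pm c(t)$. Consequently the whole problem collapses to the geometric statement: $\Re L^1_{0,2}(g)\le\tfrac{3\sqrt3}{2}$ for every \emph{starlike} normalized $g$ on $\B^2$.

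This last inequality is the genuine difficulty and the step I expect to be the main obstacle. Writing $g=z+P_2(z)+\cdots$ and $h:=[Dg(z)]^{-1}g(z)=z-P_2(z)+\cdots$, so that $\Re\langle h(z),z\rangle>0$, the naive device of restricting $h$ to complex lines $\zeta\mapsto h(\zeta w)/\zeta$ and applying the Carathéodory bound $|c_1|\le 2$ only gives $\bigl|\text{coeff of }z_2^2\text{ in }(P_2)_1\bigr|\le 3\sqrt3$ — off by a factor $2$ from the sharp value $\tfrac{3\sqrt3}{2}$, which is attained by the pure shear $\Phi$, whose vector field $h$ has no terms of degree $\ge 3$. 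The sharp bound therefore forces one to show that the higher-degree terms of $h$ — which in one variable are precisely what allow $|c_1|$ to reach $2$ — cannot be arranged to help simultaneously along all complex lines through $0$, being linked by the holomorphy of $h$ on $\B^2$. Making this rigorous, i.e.\ excluding any improvement over shears, is the core of the proof; I would attempt it either by a refined Schwarz–Pick analysis of the injective self-maps $\psi_\lambda=g^{-1}(\lambda g(\cdot))$ (for $\lambda\in[0,1]$) together with the semigroup law $\psi_\lambda\circ\psi_{\lambda'}=\psi_{\lambda\lambda'}$, or by invoking a Herglotz-type integral representation of $h$ on $\B^2$ and optimizing the resulting functional over probability measures on $\partial\B^2$.
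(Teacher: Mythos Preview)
Your overall framework matches the paper's: verify starlikeness of $\Phi$ via the infinitesimal criterion, deduce $\Phi\in S^0(\B^2)$ from starlikeness, and for the extremality write $L^1_{0,2}(g)=\int_0^\infty e^{-\tau}c(\tau)\,d\tau$ via the Loewner ODE, where $c(\tau)$ is the $z_2^2$-coefficient in the first component of the Herglotz field $G(\cdot,\tau)\in\mathcal M_-$, so that everything reduces to the uniform bound $|c(\tau)|\le\tfrac{3\sqrt3}{2}$. The paper packages this reduction as the ``shearing process'', but the underlying mechanism is the same as yours.

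The genuine gap is precisely this last bound, which you leave open. Your detour through the autonomous starlike map $\widetilde g$ is correct but gains nothing: since $[D\widetilde g(z)]^{-1}\widetilde g(z)=-G(z,\tau)$, the coefficient bound for starlike maps and the coefficient bound for $\mathcal M_-$ are literally the same inequality on $h=-G(\cdot,\tau)$. The slice-plus-Carath\'eodory attempt does lose a factor of $2$, and neither of your proposed repairs (Schwarz--Pick on $\psi_\lambda$, Herglotz measures on $\partial\B^2$) is how the paper proceeds, nor is it clear either would reach the sharp constant. The missing device is a short averaging argument. For $H\in\mathcal M_-$ with expansion $H(z)=(-z_1+\sum q^1_\alpha z^\alpha,\,-z_2+\sum q^2_\alpha z^\alpha)$, choose $\eta$ with $q^1_{0,2}e^{-i\eta}=|q^1_{0,2}|$, substitute $z_1=xe^{i(\theta+\eta)}$, $z_2=ye^{i\theta/2}$ into the real inequality $\Re\langle H(z),z\rangle\le 0$, and integrate over $\theta\in[0,4\pi]$. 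Among all $\alpha\in\N^2$ with $|\alpha|\ge 2$, the phase exponent $\alpha_1+\tfrac{\alpha_2}{2}-1$ (first-component terms) vanishes only for $\alpha=(0,2)$, and $\alpha_1+\tfrac{\alpha_2}{2}-\tfrac12$ (second-component terms) never vanishes; hence every term except the leading ones and the $(0,2)$ term integrates to zero, leaving
\[
-x^2-y^2+|q^1_{0,2}|\,xy^2\;\le\;0\qquad\text{for all }x,y\ge 0,\ x^2+y^2<1.
\]
This is exactly the scalar inequality you already optimised in your starlikeness computation (with maximum of $xy^2$ equal to $\tfrac{2}{3\sqrt3}$ on the closed sphere), and it gives $|q^1_{0,2}|\le\tfrac{3\sqrt3}{2}$ at once. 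So the step you flag as the ``genuine difficulty'' is dispatched by a one-line resonance computation rather than by any refined Schwarz--Pick or integral-representation machinery; you had all the ingredients but not this isolation trick.
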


The fact that $\Phi$ is starlike and belongs to $S^0(\B^2)$ was known from long (see  \cite[Example 3]{Su77}, \cite[Example 5]{RS}), however, we give a  proof of it in Lemma \ref{starlike}. The fact that $\Phi$ maximizes $\Re L^1_{0,2}$ follows at once from the following result:

\begin{theorem}\label{bound-intro}
Let $f(z_1,z_2)=(z_1+\sum_{\al\in \N^2, |\al|\geq 2} a^1_\al z^\al,z_2+\sum_{\al\in \N^2, |\al|\geq 2} a^2_\al z^\al)\in S^0(\B^2)$. Then $|a^1_{0,2}|\leq \frac{3\sqrt{3}}{2}$, and the bound is reached by $\Phi$.
\end{theorem}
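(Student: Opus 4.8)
The plan is to exploit the defining property of $S^0(\B^2)$: if $f\in S^0(\B^2)$, there is a normal Loewner chain $(f_t)$ with $f_0=f$, and such a chain solves a Loewner PDE $\partial_t f_t(z) = df_t(z)\,G(z,t)$ where, for a.e.\ $t$, $G(\cdot,t)$ is an infinitesimal generator (a Herglotz vector field) of the form $G(z,t) = -z + h(z,t)$ with $h(0,t)=0$, $dh_0=0$, and $\Re\langle h(z,t),z\rangle \leq 0$ (or the appropriate normalization so that $-\mathrm{id}+h$ generates a semigroup). Expanding everything in Taylor series, the second-order coefficient $a^1_{0,2}(t)$ of $(f_t)_1$ in $z_2^2$ satisfies a linear ODE driven by the corresponding second-order coefficient of the generator $G$; integrating from $t=0$ to $\infty$ and using $e^{-t}f_t \to \mathrm{id}$ expresses $a^1_{0,2}=a^1_{0,2}(0)$ as a convergent integral $\int_0^\infty e^{-t}\,c(t)\,dt$, where $c(t)$ is (twice) the coefficient of $z_2^2$ in $G_1(\cdot,t)$, up to lower-order corrections that vanish because $dh_0=0$. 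So the first step is to reduce the global bound to a pointwise bound on the generators.

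Next I would estimate $|c(t)|$. The coefficient in question is the second-order $z_2^2$-coefficient of the first component of a Herglotz vector field $h(z,t)$ on $\B^2$ with $h(0)=0$, $dh_0=0$. The key analytic input is the sharp bound on such coefficients of generators: for a generator $G=-\mathrm{id}+h$ on $\B^2$, one has $\bigl|\tfrac12 \partial^2_{z_2^2} h_1(0)\bigr| \leq$ some universal constant. I expect this constant to be exactly $\tfrac{3\sqrt3}{2}$, matching the statement — most plausibly it comes from reducing to a one-variable Herglotz-type estimate on the slice $z_1=0$, $|z_2|<1$, where $h_1(0,z_2,t)$ is a holomorphic function of $z_2$ vanishing to second order whose real part (paired suitably) is controlled, giving a classical coefficient bound of the form $|{\rm coeff}| \le$ const via the Schwarz–Pick lemma or the Herglotz representation. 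Plugging $|c(t)|\le \tfrac{3\sqrt3}{2}$ into $|a^1_{0,2}| \le \int_0^\infty e^{-t}|c(t)|\,dt \le \tfrac{3\sqrt3}{2}\int_0^\infty e^{-t}\,dt = \tfrac{3\sqrt3}{2}$ yields the bound.

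For sharpness, I would simply check that $\Phi$ realizes equality: by Lemma \ref{starlike}, $\Phi$ is starlike, hence $\Phi\in S^0(\B^2)$, and directly $\tfrac12 \partial^2_{z_2^2}\Phi_1(0) = \tfrac{3\sqrt3}{2}$, so $a^1_{0,2}(\Phi) = \tfrac{3\sqrt3}{2}$ and the bound is attained. (Equivalently one can exhibit the normal Loewner chain $f_t(z) = (e^t z_1 + \tfrac{3\sqrt3}{2}(e^t - e^{-t})z_2^2,\, e^t z_2)$ extending $\Phi$ and observe the generator is the constant autonomous one whose $z_2^2$-coefficient sits exactly at the extremal value for all $t$.)

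The main obstacle is the pointwise coefficient estimate for Herglotz vector fields on $\B^2$ — identifying the correct extremal one-variable problem after slicing and verifying that the constant is precisely $\tfrac{3\sqrt3}{2}$ rather than something larger. The $\tfrac{3\sqrt3}{2} = \tfrac32\cdot\sqrt3$ shape strongly suggests optimizing a real expression like $t \mapsto t(1-t)$-type quantity or, more likely, maximizing $|a|(1-|a|^2)^{1/2}$ or $2\sqrt{r}(1-r)$ over the relevant parameter coming from the normalization $\Re\langle h(z),z\rangle \le 0$ combined with $dh_0 = 0$; pinning down that optimization (and ensuring the slice reduction loses nothing) is where the real work lies. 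Everything else — the Loewner ODE for the coefficient, the integration, the convergence from normality — is routine once the framework of the transition map / Loewner PDE for $S^0(\B^n)$ is in place.
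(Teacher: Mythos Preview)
Your overall plan is the same as the paper's: reduce the coefficient bound on $f$ to a pointwise coefficient bound on the Herglotz field, then integrate in $t$. The paper packages the coefficient extraction via the \emph{shearing} operation $h\mapsto h^{[c]}$ and works with the Loewner ODE for the evolution family $\varphi_{s,t}$ rather than the PDE for $f_t$, but the resulting integral representation $a^1_{0,2}=\int_0^\infty e^{-t}q^1_{0,2}(t)\,dt$ is the one you describe, and the crux is exactly the estimate you flag, namely $|q^1_{0,2}(t)|\le \tfrac{3\sqrt3}{2}$ for $G(\cdot,t)\in\mathcal M_-$ (Corollary~\ref{stimaM}).

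There is, however, a genuine gap in your proposed route to that bound. Restricting to the slice $z_1=0$ gives no information on $H_1$: the condition $\Re\langle H(z),z\rangle\le 0$ at $(0,z_2)$ reads $\Re\big(H_2(0,z_2)\overline{z_2}\big)\le 0$, which involves only $H_2$. So a one-variable Schwarz--Pick/Herglotz argument on that slice cannot see $q^1_{0,2}$ at all. The paper instead substitutes $z_1=xe^{i(\theta+\eta)}$, $z_2=ye^{i\theta/2}$ (with $\eta$ chosen so that $q^1_{0,2}e^{-i\eta}=|q^1_{0,2}|$) into $\Re\langle H(z),z\rangle\le 0$ and averages over $\theta\in[0,4\pi]$; among all $|\alpha|\ge 2$, the exponent $\alpha_1+\tfrac{\alpha_2}{2}-1$ vanishes only for $\alpha=(0,2)$ and $\alpha_1+\tfrac{\alpha_2}{2}-\tfrac12$ never vanishes, so after averaging only $-x^2-y^2+|q^1_{0,2}|xy^2\le 0$ survives. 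Maximizing $xy^2$ on $x^2+y^2=1$ (attained at $x=1/\sqrt3$) yields $|q^1_{0,2}|\le\tfrac{3\sqrt3}{2}$. This averaging device is precisely what underlies Proposition~\ref{cutoff}.

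Two smaller corrections. First, $e^{-t}f_t\to{\sf id}$ is false for general normal Loewner chains (take $f_t=e^t\Phi$); what you actually need, and what normality gives, is that $e^{-t}a^1_{0,2}(t)$ is bounded, hence $e^{-2t}a^1_{0,2}(t)\to 0$, and your integral formula follows. Second, your parenthetical chain $\big(e^t z_1+\tfrac{3\sqrt3}{2}(e^t-e^{-t})z_2^2,\,e^t z_2\big)$ has $f_0={\sf id}$, not $\Phi$; the correct normal chain extending $\Phi$ is simply $f_t=e^t\Phi$, with constant generator $H(z)=(-z_1+\tfrac{3\sqrt3}{2}z_2^2,-z_2)$, as in Lemma~\ref{starlike}.
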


The proof of Theorem \ref{bound-intro} relies on a process that we call {\sl shearing process} which has no  one-dimensional analogous and it seems to be interesting by itself. Let
\[
\HD:=\{h\in \Hol: h(0)=0 \hbox{ and $dh_0$ is diagonal and invertible}\}.
\]

\begin{definition}
Let $h\in \HD$ and write the Taylor expansion at $0$ as
\[
h(z)=(\lambda z_1+Az_2^2+O(|z_1|^2,|z_1z_2|,\|z\|^3), \mu z_2 + O(\|z\|^2)).
\]
Then we define the {\sl shearing of $h$} to be
\[
h^{[c]}(z_1,z_2):=(\lambda z_1+Az_2^2, \mu z_2)\quad \forall z\in \B^2.
\]
\end{definition}

The main result of the paper is the following

\begin{theorem}\label{shearingLoewner}
Let $(f_t)$ be a normal Loewner chain in $\B^2$. Then $(f_t^{[c]})$ is a normal Loewner chain in $\B^2$. In particular, if $f\in S^0(\B^2)$ then $f^{[c]}\in S^0(\B^2)$.
\end{theorem}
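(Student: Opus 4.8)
The plan is to transfer everything to the level of Herglotz vector fields, where the shearing operation becomes a simple averaging.

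Recall that a normalized Loewner chain $(f_t)$ with $\{e^{-t}f_t\}$ normal is exactly one that solves a Loewner PDE $\de_t f_t(z)=d(f_t)_z\bigl(G(z,t)\bigr)$ for a.e.\ $t\geq 0$ and all $z\in\B^2$, where $G$ is a Herglotz vector field whose sections $G(\cdot,t)$ lie, for a.e.\ $t$, in the Carath\'eodory class $\mathcal M$ of holomorphic $h\colon\B^2\to\C^2$ with $h(0)=0$, $dh_0={\sf id}$ and $\Re\la h(z),z\ra>0$ for $0<\|z\|<1$; conversely, solving the associated evolution‑family ODE, every such $G$ arises this way. Write $f_t(z)=(e^tz_1+A(t)z_2^2+\cdots,\,e^tz_2+\cdots)$ and $G(z,t)=(z_1+c(t)z_2^2+\cdots,\,z_2+\cdots)$, so that $f_t^{[c]}(z)=(e^tz_1+A(t)z_2^2,\,e^tz_2)$ and the shearing of $G(\cdot,t)$ is $G^{[c]}(z,t):=(z_1+c(t)z_2^2,\,z_2)$. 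Reading off the coefficient of $z_2^2$ in the first component of the Loewner PDE for $(f_t)$ gives $\dot A(t)=e^tc(t)+2A(t)$, and with this the identity $\de_t f_t^{[c]}(z)=d(f_t^{[c]})_z\bigl(G^{[c]}(z,t)\bigr)$ is immediate.

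The point of the proof is that $G^{[c]}(\cdot,t)\in\mathcal M$ for a.e.\ $t$. The key observation is that shearing is a vector‑valued average over a circle of unitary rotations of $\C^2$: putting $R_\theta:={\sf diag}(e^{2i\theta},e^{i\theta})$, one checks term by term on Taylor series that for every $h\in\HD$
\[
h^{[c]}(z)=\frac1{2\pi}\int_0^{2\pi}R_\theta^{-1}\,h\bigl(R_\theta z\bigr)\,d\theta ,
\]
since in the first component only the monomials $z_1$ and $z_2^2$, and in the second only $z_2$, are fixed by all the $R_\theta$. Now $\mathcal M$ is invariant under $h\mapsto U^{-1}h(U\,\cdot)$ for $U$ unitary — indeed $\Re\la U^{-1}h(Uz),z\ra=\Re\la h(Uz),Uz\ra$ and $\|Uz\|=\|z\|$, and the normalizations at $0$ are preserved — and the three defining conditions of $\mathcal M$ commute with the integral over $\theta$; hence $h\in\mathcal M\Rightarrow h^{[c]}\in\mathcal M$, and applying this to $G(\cdot,t)$ for a.e.\ $t$ gives $G^{[c]}(\cdot,t)\in\mathcal M$. (Equivalently, this says $|c(t)|\leq\tfrac{3\sqrt3}{2}$, the sharp coefficient bound for $\mathcal M(\B^2)$: since $\max_{\|z\|=1}|z_1|\,|z_2|^2=\tfrac{2\sqrt3}{9}$, the inequality $|c|\leq\tfrac{3\sqrt3}{2}$ is precisely what makes $(z_1+cz_2^2,z_2)$ lie in $\mathcal M$, which is also the origin of the constant in Theorems \ref{example} and \ref{bound-intro}.) Finally, $t\mapsto c(t)$ inherits measurability and local integrability from $G$, so $G^{[c]}$ is again a Herglotz vector field; $\{e^{-t}A(t)\}$ is bounded by Cauchy estimates on the normal family $\{e^{-t}f_t\}$, so $\{e^{-t}f_t^{[c]}\}$ is normal; and each $f_t^{[c]}$ is univalent, being the dilation $e^t{\sf id}$ followed by the shear automorphism $(w_1,w_2)\mapsto(w_1+e^{-2t}A(t)w_2^2,\,w_2)$ of $\C^2$.

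It then remains to invoke the converse direction of the Loewner PDE correspondence: a family of univalent maps with linear part $e^t{\sf id}$ that solves the Loewner PDE with a Herglotz vector field in $\mathcal M$ and for which $\{e^{-t}\cdot\}$ is normal is a normal Loewner chain — concretely, one produces the evolution family of $G^{[c]}$, which maps $\B^2$ into itself because $G^{[c]}(\cdot,t)\in\mathcal M$, and the computation $\de_t\bigl(f_t^{[c]}\circ\varphi_{s,t}\bigr)=0$ forces $f_s^{[c]}=f_t^{[c]}\circ\varphi_{s,t}$, hence $f_s^{[c]}(\B^2)\subseteq f_t^{[c]}(\B^2)$. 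This has all been checked for $(f_t^{[c]})$, so $(f_t^{[c]})$ is a normal Loewner chain; taking $t=0$ gives the last assertion. The one delicate point is that the argument must be run with the vector fields and not with the maps: the averaging identity above does \emph{not} respect univalence or subordination — an average of the chains $R_\theta^{-1}f_t(R_\theta\,\cdot)$ need not be a Loewner chain — so it is essential that the averaging is applied to the field class $\mathcal M$, which \emph{is} convex and unitarily invariant, and that the passage back to a Loewner chain goes through the (nontrivial) Loewner PDE machinery.
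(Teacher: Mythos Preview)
Your proof is correct and follows the same overall strategy as the paper: pass to the Herglotz vector field, show that shearing preserves the class $\mathcal M$ (the paper's $\mathcal M_{-}$), and then return to the Loewner chain via the evolution family. The genuine difference is in how the key lemma is argued. The paper proves its Proposition~\ref{cutoff}---that $H\in\mathcal M_{-}$ implies $H^{[c]}\in\mathcal M_{-}$---by substituting $z_1=xe^{i(\theta+\eta)}$, $z_2=ye^{i\theta/2}$ into the scalar $\Re\langle H(z),z\rangle$ and integrating in $\theta$ over $[0,4\pi]$; your averaging identity $h^{[c]}=\frac{1}{2\pi}\int_0^{2\pi}R_\theta^{-1}h(R_\theta\,\cdot)\,d\theta$ with $R_\theta={\sf diag}(e^{2i\theta},e^{i\theta})$ is the same rotation (after $\theta\mapsto\theta/2$), but applied to the vector field itself, so that the conclusion follows at once from unitary invariance and convexity of $\mathcal M$---a cleaner and more conceptual packaging that also explains \emph{why} the shearing is the right truncation. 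The second difference is that the paper works with the Loewner ODE for the evolution family $\v_{s,t}$ and uses a composition lemma $(h\circ g)^{[c]}=h^{[c]}\circ g^{[c]}$ (Lemma~\ref{compo-l}) to transfer the ODE directly to $\v_{s,t}^{[c]}$, whereas you read off the single coefficient relation $\dot A=e^tc+2A$ from the Loewner PDE for $f_t$ and then rebuild the evolution family from $G^{[c]}$; both routes arrive at $f_s^{[c]}=f_t^{[c]}\circ\v_{s,t}$ and hence at subordination.
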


Such a theorem is proved in Section \ref{shearLow}. Using such a result, in Section \ref{bound} we prove Theorem \ref{bound-intro}.

We remark that our construction can be done  in any dimension greater than one, but for the sake of clearness, we give the results only in dimension $2$. Also, Theorem \ref{example} gives an example of a function in $S^0(\B^2)$ which is bounded by a certain $M>1$ but it is not a ``$\log M$-time reachable function'' in $S^0(\B^2)$ (see, {\sl e.g.}, \cite{GHK} for definition), contrarily to the one-dimensional case where the set of functions in $S^0(\D)$ which are bounded by $M>1$ coincides with ``$\log M$-time reachable functions'' in $S^0(\D)$.

\medskip

This work was done while the author was ``Giovanni Prodi chair'' in Nonlinear Analysis in Winter 2013/14 at Institut f\"{u}r Mathematik, Universit\"{a}t W\"{u}rzburg. He wishes to sincerely thank all the staff of the Chair of Complex Analysis for all the support and the  great atmosphere experienced there.

\section{The shearing process of Loewner chains}\label{shearLow}

\subsection{Shearing the class  $\mathcal M_{-}$}
Let
\[
\mathcal M_{-}:=\{H\in \Hol : H(0)=0, dH_0=-{\sf id}, \Re \langle H(z), z\rangle \leq 0 \quad \forall z\in \B^2\}.
\]
We show that the shearing of a holomorphic vector field in the class $\mathcal M_{-}$ is still in the class $\mathcal M_{-}$:

\begin{proposition}\label{cutoff}
If $H\in \mathcal M_{-}$, then  $H^{[c]}\in \mathcal M_{-}$.
\end{proposition}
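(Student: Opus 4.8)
The plan is to verify directly that $H^{[c]}$ satisfies the three defining conditions of $\mathcal M_{-}$. Write the Taylor expansion of $H \in \mathcal M_{-}$ at the origin as
\[
H(z) = \bigl(-z_1 + A z_2^2 + R_1(z),\ -z_2 + R_2(z)\bigr),
\]
where $R_1$ collects the terms $O(|z_1|^2, |z_1 z_2|, \|z\|^3)$ and $R_2 = O(\|z\|^2)$, so that by definition $H^{[c]}(z) = (-z_1 + A z_2^2, -z_2)$. The conditions $H^{[c]}(0) = 0$ and $d(H^{[c]})_0 = -{\sf id}$ are immediate from this formula. The only real content is the dissipativity inequality $\Re\langle H^{[c]}(z), z\rangle \le 0$ for all $z \in \B^2$, and establishing that is where the work lies.

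The key idea I would use is an averaging/rotation trick exploiting the homogeneity structure that the shearing picks out. For $\theta \in \R$ consider the rotation $\sigma_\theta(z_1, z_2) := (e^{i\theta} z_1, e^{i\theta/2} z_2)$; since $\|z\| = 1$ iff $\|\sigma_\theta z\| < 1$ is preserved, each $\sigma_\theta$ is a (unitary-type, though not unitary) automorphism of $\B^2$ — more precisely I should check which diagonal scalings preserve the ball, and it may be cleaner to work on a Reinhardt-invariant exhaustion or to use the circular automorphisms $(z_1, z_2) \mapsto (e^{i\theta} z_1, z_2)$ together with a separate scaling. The point is that the map $z \mapsto \sigma_{-\theta}\bigl(H(\sigma_\theta z)\bigr)$ again lies in $\mathcal M_{-}$ (it is the pullback of the dissipative vector field $H$ by an automorphism fixing $0$ with differential $-{\sf id}$, up to checking the normalization is preserved — the quadratic scaling in the second coordinate is exactly designed so that the $A z_2^2$ term transforms the same way as the $-z_1$ term). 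Averaging $\sigma_{-\theta} \circ H \circ \sigma_\theta$ over $\theta$ against a suitable character should kill the remainder terms $R_1, R_2$ — which carry monomials of the "wrong" weighted degree — while reproducing exactly the homogeneous piece $(-z_1 + A z_2^2, -z_2)$, i.e. $H^{[c]}$. Since $\mathcal M_{-}$ is a closed convex cone (closed under the topology of local uniform convergence and under convex combinations, both clear from the definition), the average stays in $\mathcal M_{-}$, giving $H^{[c]} \in \mathcal M_{-}$.

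Concretely, I expect the weights to be: $z_1$ has weight $1$, $z_2$ has weight $1/2$ (mirroring $d(f_t)_0 = e^t{\sf id}$ acting with these weights in the shearing context), so that $-z_1$, $A z_2^2$, and $-z_2 \cdot \bar z_2$-type pairings all sit in a single weighted-homogeneous class, while every monomial in $R_1$ (containing a factor $z_1^2$, $z_1 z_2$, or being cubic or higher) and every monomial in $R_2$ of order $\ge 2$ has strictly larger weighted degree. Pairing $\langle \cdot, z\rangle$ against $z$ and integrating $\frac{1}{2\pi}\int_0^{2\pi}(\cdots)\,d\mu(\theta)$ with the correctly weighted rotation then projects onto the lowest weighted-homogeneous part of $\Re\langle H(z), z\rangle$, which is precisely $\Re\langle H^{[c]}(z), z\rangle$; non-negativity of $-\Re\langle H, z\rangle$ is inherited termwise by this projection.

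The main obstacle I anticipate is making the averaging argument rigorous: one must (i) identify the correct one-parameter family of ball automorphisms — the naive $(z_1, z_2)\mapsto(e^{i\theta}z_1, e^{i\theta/2}z_2)$ is \emph{not} an automorphism of $\B^2$ since it does not preserve $\|z\|$, so I would instead either pass to a weighted "ball" $\{|z_1|^2 + |z_2|^4 < 1\}$ or, more robustly, argue on each sphere $\|z\| = r$ by a scaling-and-limit procedure, letting a dilation parameter $\to 0$ so that only the leading weighted-homogeneous terms survive; (ii) verify that the pullback operation genuinely preserves the normalization $dH_0 = -{\sf id}$ and the dissipativity — dissipativity transfers along \emph{any} biholomorphism fixing $0$ because $\Re\langle H(z), z\rangle \le 0$ on $\B^n$ is equivalent to the semiflow of $H$ being a semigroup of $\B^n$, but since here the conjugating maps are not automorphisms of $\B^2$ I will instead check the inequality $\Re\langle \sigma^{-1}_* H(z), z\rangle \le 0$ by a direct change of variables, using that $\sigma$ maps a smaller ball into $\B^2$. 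If this transformation step proves delicate, the fallback is a fully elementary route: expand $\Re\langle H^{[c]}(z), z\rangle = -\|z\|^2 + \Re(A z_2^2 \bar z_1)$ explicitly, and extract from the hypothesis $\Re\langle H(z), z\rangle \le 0$ — by substituting $z = (t w_1, \sqrt{t}\, w_2)$ and sending $t \to 0^+$ — exactly the inequality $-\|w\|^2 + \Re(A w_2^2 \bar w_1) \le 0$ for all $w$ in (a rescaled copy of) $\B^2$, which is the claim.
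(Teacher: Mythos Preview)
Your main averaging idea is correct and is essentially what the paper does, but you talk yourself out of it through a computational slip, and your proposed fallback is actually wrong.

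The slip: the diagonal rotation $\sigma_\theta(z_1,z_2)=(e^{i\theta}z_1,e^{i\theta/2}z_2)$ \emph{is} a unitary automorphism of $\B^2$, since $|e^{i\theta}|=|e^{i\theta/2}|=1$ and hence $\|\sigma_\theta z\|=\|z\|$. Once you see this, your argument runs cleanly: for unitary $\sigma_\theta$ one has $\langle \sigma_{-\theta}H(\sigma_\theta z),z\rangle=\langle H(\sigma_\theta z),\sigma_\theta z\rangle$, so each conjugate $\sigma_{-\theta}\circ H\circ\sigma_\theta$ lies in $\mathcal M_{-}$; the class $\mathcal M_{-}$ is convex and closed under locally uniform limits; and a direct Taylor computation shows that the average $\frac{1}{4\pi}\int_0^{4\pi}\sigma_{-\theta}(H(\sigma_\theta z))\,d\theta$ equals $H^{[c]}$ exactly (the surviving monomials are those with $2\alpha_1+\alpha_2=2$ in the first slot and $2\alpha_1+\alpha_2=1$ in the second). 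No character, no weighted ball, no limiting procedure is needed. The paper carries out the same averaging one level down, substituting $z_1=xe^{i(\theta+\eta)}$, $z_2=ye^{i\theta/2}$ directly into the scalar inequality $\Re\langle H(z),z\rangle\le 0$ and integrating in $\theta$ over $[0,4\pi]$ to isolate $-x^2-y^2+|q^1_{0,2}|xy^2\le 0$, which then bounds $\Re\langle H^{[c]}(z),z\rangle$.

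Your fallback, however, does not work as stated. Under $z=(tw_1,\sqrt{t}\,w_2)$ the quadratic form $-\|z\|^2$ splits into pieces of different orders, $-t^2|w_1|^2-t|w_2|^2$, while $\Re(A z_2^2\bar z_1)=t^2\Re(A w_2^2\bar w_1)$. Dividing the inequality $\Re\langle H(z),z\rangle\le 0$ by $t^2$ and letting $t\to 0^+$ sends the term $-t^{-1}|w_2|^2$ to $-\infty$ whenever $w_2\neq 0$, so you recover only a triviality, not the inequality $-\|w\|^2+\Re(Aw_2^2\bar w_1)\le 0$. The anisotropic scaling singles out the right \emph{cross term} but not the right \emph{diagonal} terms; the rotational averaging is what simultaneously preserves $-\|z\|^2$ and isolates the $z_2^2\bar z_1$ contribution.
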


\begin{proof}
It is clear that $H^{[c]}(0)=0, dH^{[c]}_0={\sf -id}$, so we have to check that $\Re \langle H^{[c]}(z),z\rangle \leq 0$ for all $z\in \B^2$. Let  write the Taylor expansion of $H$ at $0$ as
\begin{equation}\label{expH}
H(z)=(-z_1+\sum_{\al\in \N^2: |\al|\geq 2} q^1_\al z^\al, -z_2+\sum_{\al\in \N^2: |\al|\geq 2} q^2_\al z^\al).
\end{equation}
We know that for all $z\in \B^2$
\begin{equation}\label{first}
0\geq  \Re \langle H(z), z\rangle =-|z_1|^2-|z_2|^2+\sum_{\al \in \N^2, |\al|\geq 2} \Re \left(q_\al^1z^\al\overline{z_1}\right)+\sum_{\al \in \N^2, |\al|\geq 2} \Re \left(q_\al^2z^\al\overline{z_2}\right).
\end{equation}
Let $\eta\in [0,2\pi)$ be such that $q^1_{0,2}e^{-i\eta}=|q^1_{0,2}|$. Take $z_1=xe^{i(\theta+\eta)}$ and $z_2=ye^{i\frac{\theta}{2}}$ for $x,y\geq 0, x^2+y^2<1$ and $\theta \in \R$. Substituting these expressions in \eqref{first} we obtain
\begin{equation}\label{second}
\begin{split}
0&\geq  -x^2-y^2+|q^1_{0,2}|xy^2+\sum_{\al \in \N^2, |\al|\geq 2, \al\neq (0,2)} x^{\al_1+1}y^{\al_2}\Re \left(q_\al^1 e^{i(\al_1-1)\eta}e^{i[\al_1+\frac{\al_2}{2}-1]\theta}\right)\\&+\sum_{\al \in \N^2, |\al|\geq 2} x^{\al_1}y^{\al_2+1}\Re \left(q_\al^2e^{i\al_1\eta}e^{i[\al_1+\frac{\al_2}{2}-\frac{1}{2}]\theta}\right).
\end{split}
\end{equation}
Now, notice that among all $\al\in \N^2$ with $|\al|\geq 2$ the expression $\al_1+\frac{\al_2}{2}-1=0$ has only the solution $\al=(0,2)$, while $\al_1+\frac{\al_2}{2}-\frac{1}{2}=0$ has no solution for $\al\in \N^2$ with $|\al|\geq 2$. Therefore, all terms in the previous expression except the first three terms are of the form $a \cos (m\theta)$, $a\cos (m\frac{\theta}{2})$ (or with $\sin$ instead of $\cos$), for some $a\in \R$ and $m\in \N, m\geq 1$. Thus, if we integrate \eqref{second} in $\theta$ for $\theta\in [0,4\pi]$ and taking into account that the series converges uniformly on compacta and thus we can exchange the series with the integral, all the terms in \eqref{second} but the first three, become zero and we obtain:
\begin{equation}\label{stimmate}
0\geq  -x^2-y^2+|q^1_{0,2}|xy^2 \quad \forall x,y\geq 0, x^2+y^2<1.
\end{equation}
From this it follows that
\begin{equation*}
\Re \langle H^{[c]}(z), z \rangle = -|z_1|^2-|z_2|^2-\Re \left(q^1_{0,2} z_2^2\overline{z_1}\right)\leq -|z_1|^2-|z_2|^2+|q^1_{0,2}||z_1||z_2|^2\leq 0,
\end{equation*}
proving that $H^{[c]}\in \mathcal M_{-}$.
\end{proof}

\begin{corollary}\label{stimaM}
Let $H\in \mathcal M_{-}$ be given by \eqref{expH}. Then $|q^1_{0,2}|\leq \frac{3\sqrt{3}}{2}$.
\end{corollary}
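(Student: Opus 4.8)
The plan is to exploit the inequality \eqref{stimmate} obtained in the proof of Proposition~\ref{cutoff}, namely $-x^2-y^2+|q^1_{0,2}|xy^2\le 0$ for all $x,y\ge 0$ with $x^2+y^2<1$, and to optimize over $(x,y)$ on the boundary circle $x^2+y^2=1$ by a continuity/limiting argument. So first I would set $a:=|q^1_{0,2}|$ and rewrite the constraint: for every $x,y\ge0$ with $x^2+y^2<1$ we have $a\,xy^2\le x^2+y^2$. Since both sides are continuous in $(x,y)$, the inequality persists on the closed disk $x^2+y^2\le1$, in particular $a\,xy^2\le 1$ whenever $x^2+y^2=1$.

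Next I would choose the extremal point on the unit circle. Parametrize $x=\cos\phi$, $y=\sin\phi$ for $\phi\in[0,\pi/2]$ and maximize $g(\phi):=\cos\phi\,\sin^2\phi=\cos\phi\,(1-\cos^2\phi)$. Setting $t=\cos\phi\in[0,1]$, this is $t-t^3$, whose derivative $1-3t^2$ vanishes at $t=1/\sqrt3$, giving maximum value $\frac{1}{\sqrt3}-\frac{1}{3\sqrt3}=\frac{2}{3\sqrt3}=\frac{2\sqrt3}{9}$. Plugging the corresponding $(x,y)$ (with $x^2=1/3$, $y^2=2/3$, so $xy^2=\frac{2\sqrt3}{9}$) into $a\,xy^2\le1$ yields $a\le\frac{9}{2\sqrt3}=\frac{9\sqrt3}{6}=\frac{3\sqrt3}{2}$, which is exactly the claimed bound.

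The only slightly delicate point is that \eqref{stimmate} is stated with the strict inequality $x^2+y^2<1$, so the optimal point lies on the boundary and is not directly covered; the fix is the trivial continuity argument above (or equivalently, apply \eqref{stimmate} at $(rx,ry)$ for $r<1$ and let $r\to1^-$, which gives $a\,r^3xy^2\le r^2$, i.e. $a\,r\,xy^2\le1$, then let $r\to1$). I do not anticipate any real obstacle here: once \eqref{stimmate} is in hand, the corollary is a one-variable calculus optimization. I would also remark in passing that equality forces $q^1_{0,2}=\frac{3\sqrt3}{2}e^{i\eta}$ for the chosen phase normalization, which is consistent with the extremal field attached to $\Phi$, though for the statement of Corollary~\ref{stimaM} only the bound itself is needed.
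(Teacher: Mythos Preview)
Your proof is correct and follows essentially the same approach as the paper. The paper's argument reduces via Proposition~\ref{cutoff} to the sheared field and invokes \eqref{stimmate}, then simply asserts that ``studying such a function it is not hard to show'' the bound $|a|\le \tfrac{3\sqrt3}{2}$; you carry out exactly that optimization explicitly (maximizing $xy^2$ on the quarter-circle), including the continuity argument needed to pass from the open disk to the boundary, so your write-up actually supplies the detail the paper omits.
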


\begin{proof}
By Proposition \ref{stimaM} it is enough to prove  the result for  $H(z)=(-z_1+az_2^2, -z_2)$. By \eqref{stimmate}, we have
\[
 -x^2-y^2+|a|xy^2\leq 0 \quad \forall x,y\geq 0, x^2+y^2<1.
\]
Studying such a function it is not hard to show that the equation holds  only if $|a|\leq \frac{3\sqrt{3}}{2}$.
\end{proof}

The previous estimate is sharp, indeed we have the following result:

\begin{lemma}\label{starlike}
Let $H(z)=(-z_1+\frac{3\sqrt{3}}{2} z_2^2, -z_2)$. Then $H\in \mathcal M_{-}$. Moreover,
\[
f_t(z)=e^t\left(z_1+\frac{3\sqrt{3}}{2}z_2^2, z_2\right)
\]
 is a normal Loewner chain which satisfies the Loewner PDE
\[
\frac{\de f_t(z)}{\de t}=d(f_t)_z H(z).
\]
In particular, $\Phi=f_0\in S^0(\B^2)$ and $\Phi$ is starlike.
\end{lemma}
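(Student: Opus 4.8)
The plan is to verify the three assertions of Lemma~\ref{starlike} in turn: that $H\in\mathcal M_{-}$, that the given family $(f_t)$ is a normal Loewner chain solving the stated Loewner PDE, and that consequently $\Phi=f_0\in S^0(\B^2)$ is starlike. For the first assertion I would simply invoke the sharpness computation already implicit in Corollary~\ref{stimaM}: with $H(z)=(-z_1+az_2^2,-z_2)$ and $a=\frac{3\sqrt3}{2}$ one has $\Re\langle H(z),z\rangle=-|z_1|^2-|z_2|^2+\Re(a z_2^2\overline{z_1})\le -|z_1|^2-|z_2|^2+\frac{3\sqrt3}{2}|z_1||z_2|^2$, and the elementary inequality $\frac{3\sqrt3}{2}xy^2\le x^2+y^2$ for $x,y\ge 0$, $x^2+y^2\le 1$ (the extremal case of Corollary~\ref{stimaM}, attained at $x=1/\sqrt3$, $y=\sqrt{2/3}$) gives $\Re\langle H(z),z\rangle\le 0$. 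Together with $H(0)=0$ and $dH_0=-{\sf id}$ this yields $H\in\mathcal M_{-}$.

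Next I would check the Loewner PDE directly. With $f_t(z)=e^t(z_1+\frac{3\sqrt3}{2}z_2^2,z_2)$ we have $\frac{\de f_t}{\de t}=e^t(z_1+\frac{3\sqrt3}{2}z_2^2,z_2)=f_t(z)$, while $d(f_t)_z$ is the linear map $(w_1,w_2)\mapsto e^t(w_1+3\sqrt3\, z_2 w_2, w_2)$, so $d(f_t)_z H(z)=e^t(-z_1+\frac{3\sqrt3}{2}z_2^2+3\sqrt3\,z_2(-z_2),-z_2)=e^t(-z_1-\frac{3\sqrt3}{2}z_2^2,-z_2)$. Hmm, this equals $-f_t(z)$ rather than $+f_t(z)$; I would therefore read the PDE in the paper's sign convention, i.e. with $H\in\mathcal M_{-}$ the chain solves $\frac{\de f_t}{\de t}=-d(f_t)_zH(z)$, or equivalently note that the intended vector field is $G=-H\in\mathcal M_{+}$ and $\frac{\de f_t}{\de t}=d(f_t)_zG(z)=f_t(z)$ holds identically. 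In either reading the verification is a one-line computation; the point is that the PDE is satisfied with the generator being (up to the standard sign) the autonomous field $H$, and no integration is needed because $f_t=e^t\Phi$ is an explicit self-similar solution.

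For the normality and univalence: each $f_t$ is univalent since $\Phi$ is (a shear $(z_1,z_2)\mapsto(z_1+P(z_2),z_2)$ is a polynomial automorphism of $\C^2$ with inverse $(z_1,z_2)\mapsto(z_1-P(z_2),z_2)$), $f_t(0)=0$, $d(f_t)_0=e^t{\sf id}$, and $f_s(\B^2)=e^s\Phi(\B^2)\subset e^t\Phi(\B^2)=f_t(\B^2)$ for $s\le t$ because $\Phi(\B^2)$ is the image of a bounded set and $\Phi$ maps the origin-star-shaped ball to an origin-star-shaped domain (which is exactly the starlikeness to be proved, so I would instead argue the inclusion from $e^{-t}f_t\equiv\Phi$ together with the general fact that a normalized family with $e^{-t}f_t$ constant and the subordination chain condition is equivalent to $\Phi(\B^2)$ being starlike — alternatively, deduce the chain property directly from the Loewner PDE via the standard existence/uniqueness theorem for Loewner chains generated by a field in $\mathcal M_{+}$, e.g. Graham--Kohr). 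Normality of $\{e^{-t}f_t\}=\{\Phi\}$ is immediate since it is a constant family. Finally $\Phi=f_0\in S^0(\B^2)$ by definition, and starlikeness of $\Phi$ follows because $e^{-t}f_t(\B^2)=\Phi(\B^2)$ is independent of $t$ while $f_t(\B^2)=e^t\Phi(\B^2)$ increases to $\C^2$; equivalently, $\Phi(\B^2)=\bigcup_{t\ge 0}e^{-t}f_t(\B^2)$ is a union of dilates of itself, hence starlike with respect to the origin.

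The main obstacle is bookkeeping rather than depth: one must pin down the sign convention in the Loewner PDE (the field lies in $\mathcal M_{-}$, so the ``$+$'' in $\frac{\de f_t}{\de t}=d(f_t)_zH(z)$ should be read in the paper's normalization, under which $e^{-t}f_t\to{\sf id}$-type estimates make $(f_t)$ normal), and one must be a little careful that the abstract theorem guaranteeing that a solution of the Loewner PDE with generator in the relevant Herglotz class is indeed a normal Loewner chain applies here — but since we have the explicit closed-form solution $f_t=e^t\Phi$, all the hypotheses (subordination, normalization, normality) can be checked by hand, and the starlikeness of $\Phi$ is then essentially a restatement of the self-similarity $f_t=e^t f_0$.
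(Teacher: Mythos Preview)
Your approach is essentially the paper's: verify $H\in\mathcal M_{-}$ by the elementary inequality, check the PDE by direct computation, and then appeal to the general theory in Graham--Kohr to conclude that $(f_t)$ is a normal Loewner chain and hence that $\Phi$ is starlike. Two remarks are worth making. First, the sign discrepancy you flagged is real: with $H\in\mathcal M_{-}$ one indeed gets $d(f_t)_zH(z)=-f_t(z)$, so the PDE in the statement should be read with the opposite sign (equivalently, with $-H\in\mathcal M_{+}$); the paper simply records ``a simple computation'' here without writing it out. Second, your hands-on attempt at the subordination/starlikeness step is, as you yourself note, circular: the inclusion $e^s\Phi(\B^2)\subset e^t\Phi(\B^2)$ \emph{is} starlikeness of $\Phi(\B^2)$, and your final sentence (``a union of dilates of itself, hence starlike'') does not stand on its own since $e^{-t}f_t(\B^2)=\Phi(\B^2)$ for every $t$. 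The paper avoids this circle by invoking \cite[Thm.~8.1.6]{GK03} to get that $(f_t)$ is a normal Loewner chain from the PDE/ODE, and then the standard equivalence \cite[Thm.~8.2.1]{GK03} (``$f$ is starlike iff $(e^tf)$ is a normal Loewner chain'') to read off starlikeness --- which is exactly the escape route you end up pointing to.
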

\begin{proof}
A simple computation shows that $H\in \mathcal M_{-}$ and that $(f_t)$ satisfies the Loewner PDE and the hypothesis of \cite[Thm. 8.1.6]{GK03}, so it is a normal Loewner chain. Alternatively, one can solve the Loewner ODE $\frac{\de \v_{s,t}(z)}{\de t}=H(\v_{s,t}(z))$, $\v_{s,s}(z)=z$ (which is in fact a semigroup equation) and check that $f_s=\lim_{t\to \infty}e^{t}\v_{s,t}$ for $s\in \R^+$. Hence, by \cite[Thm 8.1.6]{GK03} it follows that   $(f_t)$ is a normal Loewner chain. It is finally well known (see, {\sl e.g.} \cite[Thm. 8.2.1]{GK03}) that $f$ is starlike if and only if $(e^t f)$ is a normal Loewner chain.
\end{proof}

\subsection{Shearing normal Loewner chains} We show that the shearing of a normal Loewner chain is still a normal Loewner chain.

The first simple observation  is the following lemma, which can be checked by Taylor expansion at $0$:
\begin{lemma}\label{compo-l}
Let $h,g\in  \HD$ and suppose $h\circ g$ is well defined. Then for all $z\in \B^2\cap g^{[c]}(\B^2)$
\begin{equation}\label{compo}
(h\circ g)^{[c]}(z)=h^{[c]}(g^{[c]}(z)).
\end{equation}
\end{lemma}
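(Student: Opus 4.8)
The plan is to verify \eqref{compo} by a direct Taylor-expansion computation at the origin, keeping track only of the terms that survive under the shearing operation. Write the relevant jets of $g$ and $h$ in $\HD$ as
\[
g(w)=(\lambda w_1+Aw_2^2+R_g(w), \mu w_2+S_g(w)),\qquad
h(\zeta)=(\nu \zeta_1+B\zeta_2^2+R_h(\zeta),\rho\zeta_2+S_h(\zeta)),
\]
where $R_g, S_g, R_h, S_h$ collect the remaining terms: for the first component the monomials $w_1^2, w_1w_2, \|w\|^3$ and higher, and for the second component all terms of order $\geq 2$. By definition $g^{[c]}(w)=(\lambda w_1+Aw_2^2,\mu w_2)$ and similarly for $h^{[c]}$, so the right-hand side of \eqref{compo} is immediately
\[
h^{[c]}(g^{[c]}(w))=\bigl(\nu(\lambda w_1+Aw_2^2)+B(\mu w_2)^2,\ \rho\mu w_2\bigr)
=\bigl(\nu\lambda w_1+(\nu A+B\mu^2)w_2^2,\ \rho\mu w_2\bigr).
\]

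Next I would compute the relevant part of the genuine composition $h\circ g$. Its differential at $0$ is $dh_0\,dg_0=\mathrm{diag}(\nu\lambda,\rho\mu)$, which is diagonal and invertible, so $h\circ g\in\HD$ and the shearing $(h\circ g)^{[c]}$ is defined. For the second component of $h\circ g$ one gets $\rho(\mu w_2+S_g(w))+S_h(g(w))$; its linear part is $\rho\mu w_2$, matching the display above. For the first component one gets $\nu(\lambda w_1+Aw_2^2+R_g(w))+B(\mu w_2+S_g(w))^2+R_h(g(w))$. The point is to extract exactly the coefficient of $w_2^2$ together with the linear term $w_1$: the contribution $\nu\lambda w_1$ comes from $\nu\cdot\lambda w_1$; the coefficient of $w_2^2$ picks up $\nu A$ from $\nu A w_2^2$ and $B\mu^2$ from the square $B(\mu w_2)^2$. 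Every other piece — $\nu R_g(w)$, the cross and $S_g$-quadratic terms inside $B(\mu w_2+S_g(w))^2$, and $R_h(g(w))$ — contributes only to monomials of the form $w_1^2$, $w_1w_2$, or order $\geq 3$ (using that $R_g$, $S_g$ start at order $2$ and $R_h$ at the excluded monomials), hence is killed by $[c]$. Therefore $(h\circ g)^{[c]}(w)=(\nu\lambda w_1+(\nu A+B\mu^2)w_2^2,\rho\mu w_2)$, which is exactly $h^{[c]}(g^{[c]}(w))$.

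The only genuine bookkeeping obstacle is confirming that no term of $R_h(g(w))$ or of the $S_g$-involving parts of the square can produce a pure $w_2^2$ term. For $R_h$ this holds because $R_h$ involves $\zeta_1$ in every monomial or is of order $\geq 3$, and $\zeta_1=g_1(w)$ vanishes to order $1$ while $\zeta_2=g_2(w)$ vanishes to order $1$ as well, so any monomial of $R_h$ evaluated at $g(w)$ either carries a factor $g_1(w)$ (which contributes $w_1$ or higher, never a pure $w_2^2$ at the relevant order) or has total order $\geq 3$; one checks that the lowest-order pure-$w_2$ contribution from such terms is $w_2^3$ or comes multiplied by $w_1$. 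Likewise $2B\mu w_2 S_g(w)+B S_g(w)^2$ has order $\geq 3$ since $S_g$ has order $\geq 2$. Since the identity \eqref{compo} is an identity between the shearings, which depend only on these finitely many jet coefficients, and both sides have been shown to equal $(\nu\lambda w_1+(\nu A+B\mu^2)w_2^2,\rho\mu w_2)$, the lemma follows; equality as maps on $\B^2\cap g^{[c]}(\B^2)$ is then automatic because both sides are polynomial maps agreeing as formal expressions.
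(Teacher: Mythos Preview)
Your proposal is correct and follows exactly the approach the paper indicates: the paper does not give a detailed proof but simply remarks that the lemma ``can be checked by Taylor expansion at $0$,'' and your argument is a careful execution of precisely that computation, including the necessary verification that $h\circ g\in\HD$ and that no spurious $w_2^2$ term arises from the remainder pieces.
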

In fact, \eqref{compo} holds for all $z\in \C^2$ if one denotes  by $h^{[c]}$  the corresponding shear and not just its restriction to $\B^2$.

Now we can prove Theorem \ref{shearingLoewner}:
\begin{proof}[Proof of Theorem \ref{shearingLoewner}] A {\sl Herglotz vector field $G(z,t)$ associated with the class $\mathcal M_{-}$ } is a map $G:\R^+\times \B^2\to \C^2$ such that
\begin{itemize}
\item[(i)] The mapping $G(z,\cdot)$ is measurable on $\R^+$ for all
$z\in \B^2$.
\item[(ii)] $G(\cdot, t)\in \mathcal M_{-}$  for a.e. $t\in [0,+\infty)$.
\end{itemize}

Let $(f_t)$ be a normal Loewner chain and set $\v_{s,t}:=f_t^{-1}\circ f_s$ for $0\leq s\leq t$. It is known (see, {\sl e.g.} \cite[Chapter 8]{GK03}) that $(f_t)$ is absolutely continuous in $t$ locally uniformly in $z$ and that there exists a Herglotz vector field $G(z,t)$ associated with the class $\mathcal M_{-}$ such that the following Loewner ODE is satisfied:
\[
\frac{\de \v_{s,t}(z)}{\de t}=G(\v_{s,t}(z),t) \quad a.e.\ t\geq 0, \forall z\in \B^2.
\]
By Proposition \ref{cutoff}, $G^{[c]}(z,t)$ is a Herglotz vector field associated with the class $\mathcal M_{-}$. By Lemma \ref{compo-l}, it follows that
\[
\frac{\de \v^{[c]}_{s,t}(z)}{\de t}=G^{[c]}(\v^{[c]}_{s,t}(z),t) \quad a.e.\ t\geq s, \forall z\in \v_{s,t}^{[c]}(\B^2)\cap \B^2.
\]
In fact, the previous equation holds for all $z\in \C^2$ if  considered at level of shears in $\C^2$.

Thus, since by \cite[Theorem 8.1.3]{GK03} the solution  to the Loewner ODE is unique, in particular, for all $0\leq s\leq t$, $\v^{[c]}_{s,t}(\B^2)\subset \B^2$. Hence, since again by Lemma \ref{compo-l} we have $f_s^{[c]}=f_t^{[c]}\circ \v_{s,t}^{[c]}$ for all $0\leq s\leq t$, it follows that $f^{[c]}_s(\B^2)\subset f_t^{[c]}(\B^2)$ for all $0\leq s\leq t$. It is now easy to check that $(f_t^{[c]})$ is a normal Loewner chain.
\end{proof}

\section{Sharp bound for the coefficient $a_{0,2}^1$}\label{bound}

\begin{theorem}\label{bound-in}
Let $f=(z_1+\sum_{\al\in \N^2: |\al|\geq 2} a_\al^1 z^\al, z_2+\sum_{\al\in \N^2: |\al|\geq 2} a_\al^2 z^\al)\in S^0(\B^2)$. Then $|a^1_{0,2}|\leq \frac{3\sqrt{3}}{2}$. Such an estimate is sharp and it is reached by $\Phi$.
\end{theorem}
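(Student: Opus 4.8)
The plan is to reduce the problem about an arbitrary $f\in S^0(\B^2)$ to the already-understood statement about shear automorphisms, using Theorem~\ref{shearingLoewner}. Given $f=(z_1+\sum_{|\al|\geq 2}a_\al^1 z^\al,\ z_2+\sum_{|\al|\geq 2}a_\al^2 z^\al)\in S^0(\B^2)$, observe that $f\in\HD$ (indeed $df_0={\sf id}$ is diagonal and invertible) and that the Taylor expansion has precisely the form $h(z)=(\lambda z_1+Az_2^2+O(|z_1|^2,|z_1z_2|,\|z\|^3),\ \mu z_2+O(\|z\|^2))$ with $\lambda=\mu=1$ and $A=a_{0,2}^1$. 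Hence by definition the shearing is $f^{[c]}(z_1,z_2)=(z_1+a_{0,2}^1 z_2^2,\ z_2)$.

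Next I would invoke Theorem~\ref{shearingLoewner}: since $f\in S^0(\B^2)$, there is a normal Loewner chain $(f_t)$ with $f_0=f$, and then $(f_t^{[c]})$ is a normal Loewner chain, so $f^{[c]}=f_0^{[c]}\in S^0(\B^2)$. Thus $f^{[c]}$ is a starlike-type element of $S^0(\B^2)$ of the special shear form, and in particular $e^t f^{[c]}$ is a normal Loewner chain (as in Lemma~\ref{starlike}, a univalent map $g$ is starlike iff $(e^t g)$ is a normal Loewner chain). From a normal Loewner chain $(g_t)=(e^t f^{[c]})$ one obtains, via the standard theory (\cite[Chapter 8]{GK03}), a Herglotz vector field $H\in\mathcal M_{-}$ associated to it; concretely, because $f^{[c]}$ is the time-independent normalized univalent shear, the associated infinitesimal generator is the autonomous vector field $H(z)=-(df^{[c]})_z^{-1}(f^{[c]}(z))$ — equivalently $H$ is characterized by $d(e^t f^{[c]})_z H(z)=\partial_t(e^t f^{[c]})(z)$ — and a direct computation of the lowest-order Taylor coefficients gives $H(z)=(-z_1-a_{0,2}^1 z_2^2+\cdots,\ -z_2+\cdots)$, i.e. the coefficient $q_{0,2}^1$ of $H$ in the notation of \eqref{expH} equals $-a_{0,2}^1$.

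Now Corollary~\ref{stimaM} applies to this $H\in\mathcal M_{-}$ and yields $|q_{0,2}^1|\leq\frac{3\sqrt3}{2}$, hence $|a_{0,2}^1|=|q_{0,2}^1|\leq\frac{3\sqrt3}{2}$. Sharpness is exactly Lemma~\ref{starlike}: for $\Phi(z)=(z_1+\frac{3\sqrt3}{2}z_2^2,z_2)$ we have $\Phi\in S^0(\B^2)$ and $a_{0,2}^1(\Phi)=\frac{3\sqrt3}{2}$, so the bound is attained. This completes the proof.

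The main obstacle is the bookkeeping in the middle step: one must be careful to extract the correct relation between the coefficient $a_{0,2}^1$ of $f^{[c]}$ and the coefficient $q_{0,2}^1$ of the associated generator $H\in\mathcal M_{-}$. The cleanest route is to avoid differentiating an abstract Herglotz field and instead use that $f^{[c]}$ is itself a \emph{starlike} automorphism (being a shear of triangular form with the $z_2$-component fixed, its image is easily seen to be starlike — or one applies the shearing to the known starlike chain), so that $(e^t f^{[c]})$ is literally the normal Loewner chain solving $\partial_t f_t = d(f_t)_z H(z)$ with $H$ autonomous and $H\in\mathcal M_{-}$; comparing quadratic terms on both sides of this identity at $t=0$ then immediately gives $q_{0,2}^1=-a_{0,2}^1$. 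Everything else is a direct appeal to Theorem~\ref{shearingLoewner}, Corollary~\ref{stimaM}, and Lemma~\ref{starlike}.
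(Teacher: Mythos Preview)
Your reduction via Theorem~\ref{shearingLoewner} to the shear $f^{[c]}(z)=(z_1+a_{0,2}^1z_2^2,z_2)\in S^0(\B^2)$ is correct, but the next step is circular. You assert that $f^{[c]}$ is starlike, equivalently that $(e^tf^{[c]})$ is a normal Loewner chain, and then extract an autonomous generator $H\in\mathcal M_-$. However, for the shear $(z_1+az_2^2,z_2)$, starlikeness is \emph{equivalent} to the condition $|a|\le\frac{3\sqrt3}{2}$: the associated $H(z)=(-z_1+az_2^2,-z_2)$ lies in $\mathcal M_-$ exactly when this inequality holds (this is the content of Corollary~\ref{stimaM} together with Lemma~\ref{starlike}). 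So you are assuming what you want to prove. Neither of your proposed justifications works: the image of $\B^2$ under a triangular shear is \emph{not} automatically starlike, and there is no ``known starlike chain'' to shear unless the original $f$ happens to be starlike. Membership in $S^0(\B^2)$ alone does not imply starlikeness.

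The paper closes this gap by never claiming that $f^{[c]}$ is starlike. Instead it uses the full sheared chain $(f_t^{[c]})$ produced by Theorem~\ref{shearingLoewner}, which has the form $f_t^{[c]}(z)=(e^tz_1+a(t)z_2^2,e^tz_2)$ with $a(t)$ \emph{not} a priori equal to $e^ta_{0,2}^1$. The associated Herglotz field is genuinely time-dependent, $G^{[c]}(z,t)=(-z_1+q(t)z_2^2,-z_2)$; Corollary~\ref{stimaM} gives $|q(t)|\le\frac{3\sqrt3}{2}$ for a.e.\ $t$, and writing down and integrating the Loewner ODE for the sheared evolution family yields $|a_{0,2}^1|=\lim_{t\to\infty}|e^ta(0,t)|\le\frac{3\sqrt3}{2}$. (Incidentally, your sign is off: the computation gives $q_{0,2}^1=+a_{0,2}^1$, not $-a_{0,2}^1$, though this does not affect the modulus bound.)
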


\begin{proof}
Let $(f_t)$ be a normal Loewner chain such that $f_0=f$. By Theorem \ref{shearingLoewner}, $f^{[c]}\in S^0(\B^2)$ and it has a parametric representation given by $f_t(z)=(e^t z_1+a(t) z_2^2, e^t z_2)$, where $a:\R^+\to \R$ is a bounded absolutely continuous function and $a(0)=a^1_{0,2}$. Let $\v_{s,t}:=f_t^{-1}\circ f_s$. As in the proof of Theorem \ref{shearingLoewner}, there exists a Herglotz vector field $G(z,t)$ associated with the class $\mathcal M_{-}$ such that
\begin{equation}\label{prime}
\frac{\de \v^{[c]}_{s,t}(z)}{\de t}=G^{[c]}(\v^{[c]}_{s,t}(z),t) \quad a.e.\ t\geq s, \forall z\in \B^2.
\end{equation}
Write $G^{[c]}(z,t)=(-z_1+q(t)z_2^2, -z_2)$ and $\v^{[c]}_{s,t}=(e^{s-t}z_1+a(s,t)z_2^2, e^{s-t}z_2)$ for $0\leq s\leq t$.  Writing down explicitly \eqref{prime}, we obtain
\begin{equation*}
\begin{cases}
\frac{\de a(s,t)}{\de t}&=-a(s,t)+q(t)e^{2(s-t)} \quad a.e.\ t\geq s, \\
a(s,s)&=0
\end{cases}
\end{equation*}
from which it follows that for $0\leq s\leq t$
\[
a(s,t)=e^{s-t}\int_s^t q(\tau)e^{s-\tau}d\tau.
\]
By Corollary \ref{stimaM} we get $|a(s,t)|\leq  \frac{3\sqrt{3}}{2}e^{s-t}(1-e^{s-t})$. By \cite[Thm. 8.1.5]{GK03} $\lim_{t\to \infty} e^{t}\v^{[c]}_{s,t}=f^{[c]}_s$ (uniformly on compacta) for all $s\in \R^+$. Hence
\[
|a(s)|=\lim_{t\to \infty}|e^t a(s,t)|\leq \lim_{t\to \infty} e^t\frac{3\sqrt{3}}{2}e^{s-t}(1-e^{s-t})=\frac{3\sqrt{3}}{2}e^s,
\]
from which $|a_{0,2}^1|\leq \frac{3\sqrt{3}}{2}$.
\end{proof}

\begin{remark}
Let $\Phi_a(z_1,z_2):=(z_1+az_2^2, z_2)$ for $a\in \C$. It was known that $\Phi_a\in S^0(\B^2)$ for $|a|\leq \frac{3\sqrt{3}}{2}$ since such a map is starlike (see \cite[Example 3]{Su77}), while it was known that $\Phi_a\not\in S^0(\B^2)$ for $|a|\geq 2\sqrt{15}$ (see \cite[Remark 3.5]{GHKK}) since it does not satisfy the growth estimates for the class $S^0(\B^2)$. In fact, due to Theorem \ref{bound-in}, $\Phi_a\not\in S^0(\B^2)$ for $|a|> \frac{3\sqrt{3}}{2}$.
\end{remark}

\bibliographystyle{amsplain}

\end{document}